\begin{document}
\title{Hyperbolic Optimization over the Integer Efficient Set of MOILFP}
%
%
\author{Fatma Zohra OUAIL\inst{1}\
\and
Mohamed El-Amine CHERGUI\inst{2}
}
\authorrunning{FZ. OUAIL et al.}
%
\institute{USTHB, Faculty of Mathematics, Dpt. of Operational Research
BP 32, Bab Ezzouar 16111, Algiers, Algeria\\\email{ouail.fatmazohra@yahoo.fr}  \and  USTHB, RECITS Laboratory, Faculty of Mathematics
BP 32, Bab Ezzouar 16111, Algiers, Algeria \\
\email{mchergui@usthb.dz}}
\maketitle              
\begin{abstract}
The aim of this study is to find the optimum of a linear fractional function over the efficient set of a multi-objective linear fractional   integer program without generating all efficient solutions. By its nature, it is a global optimization problem since the efficient set is discrete, hence not convex. For this purpose, a branch and bound based method is described with a double mission to search for an optimal solution for a given linear fractional function which is moreover, efficient for a multi-objective linear fractional integer programming problem. Tests performed on instances randomly generated up to 120 variables, 100 constraints and 6 criteria are successful.

\keywords{Fractional programming \and Integer programming \and Multi-objective linear fractional optimization \and Global optimization \and Branch and bound.}

\end{abstract}

\section{Introduction}
Multi-objective programming has gained considerable momentum over the last twenty years given the development of notions and dedicated tools to enable the implementation of methods for solving these problems. Chronologically, researchers have focused first on the exact and approximate methods for multi-objective linear problems, mainly bi-objective problems in the criterion space, both in the continuous case (MOLP)  \cite{29,21,2,30,31,33}, the integer case (MOILP) \cite{28,34,36,27,26,25,22} and the mixed case  (MOMLP) \cite{3,35}. Attempts have been made to generalize exact methods of combinatorial optimization (OC) problems to the multi-objective combinatorial optimization problems (MOCO) \cite{37,38}, but generally unsuccessful. It was not until long after that we focused on the development of exact methods for multi-objective linear fractional problems (MOLFP) \cite{39} and with integer variables (MOILFP) \cite{23,9}. Based on the standard Markovitch model for the portfolio problem, Steuer et al. have studied the multi-objective case optimizing simultaneously the variance (quadratic function) and the mean (linear function) \cite{40,41,42}. More recently, an exact method has been developed for quadratic multi-objective problems with discrete variables (MOIQP) \cite{43}. However, for the multi-objective problems, the number of efficient solutions, or non dominated vectors, can be infinite or finite but astronomical for the MOCO problems and the judicious choice of an appropriate solution becomes very difficult and even impossible for the decision-maker in this case. Therefore studies have emerged in this new context, which consists in fixing a goal of obtaining optimum satisfaction among all the efficient solutions of a multi-objective given problem without enumerating the efficient set. This non-convex optimization problem has been evoked for the first time by Philip in 1972 \cite{8} and since then, the researchers have focused on continuous problems \cite{1,2,3,4,5,6,7} that benefit from an interesting property, namely that the efficient set is connected and contains extreme points of the polyhedron. However, very few works have emerged (been published) for problems considering integer variables. 

In fact, a bibliographic search for this topic would reveal that only four methods have been proposed since 1992, in the linear case (optimizing a linear function over a MOILP), see Nguyen \cite{17}, Abbas and al. \cite{16}, Jorge \cite{11} and finally Bolland and al.\cite{15}. In addition, the domain of optimizing a fractional criterion over the efficient set of a MOILP is still unexplored. We cite for instance \cite{12} where the authors developed simple pivoting techniques to generate progressively integer solutions tested at each step for efficiency and adding some constraints that eliminate not interesting points from the admissible region. And in \cite{121} where the authors developed a branch and bound based technique with the principle of efficient cut to find the optimal solution.
  
Along the present article, a more general field of research than those evoked so far, called the optimization of a hyperbolic criterion vector over the efficient set of a MOILFP will be investigated. This problem, to our knowledge has not been treated before, given its difficulty (NP-hard). In addition, this field took its importance from practice, where many real world situations can be modeled as a MOILFP program and for which we seek another fractional criterion to be optimized different from those considered in the initial problem. For instance, the indicators which are used at evaluation of economic activities are generally in fractional form, such as; output/worker, stock/selling, benefit/cost, etc. Also, Anna Isabel Barros reported in \cite{49} some interesting applications like fractional cutting stock problem \cite{44}, minimal ratio spanning tree \cite{47}, fractional knapsack problems [45], and fractional allocation problems [46] appear to be interesting from a practical point of view. Also, Erik B. Bajalinov cited in his book \cite{48} some applications of integer fractional linear programming problems such that: knapsack  problem, capital budgeting problems, set covering problem and traveling salesman problem which can be easily modeled as  multiple objective problems considering several conflicting objective functions.

This paper is organized as follows: we begin by giving some definitions and notations in section 2. Followed by, some theoretical results and proofs in section 3. Section 4 is dedicated to the presentation of solution method for optimizing a hyperbolic function over a MOILFP. In section 5, a numerical example is selected to understand the different steps of the proposed methodology.  Also, our algorithm was tested, throughout section 6, on a set of random generated instances which pointed out that till 120 variable and 100 constraints, the problem is solved in a reasonable time. We finish by giving some concluding remarks and prospects in section 7.

\section{Definition and notations}
First of all, we recall some basic proprieties of the linear fractional programming problems in which the objective function is the ratio of two linear functions and the constraints are linear. Such problems can be stated as follows: 

\[(FP)\left\{ \begin{array}{c}
{max\ z\left(x\right)=\frac{p^tx+\alpha }{q^tx+\beta }\ } \\ 
x\in X \end{array}\right.\] 
where \textit{$X$} is a non-empty compact polyhedron defined by   \textit{$X=\left\{x\in {\mathbb{R}}^n|Ax=b,\ \ \ x\geq 0\right\}$}; \textit{$A$} is an \textit{$m\times n$} matrix and \textit{b} is a vector of \textit{${\mathbb{R}}^m$}; \textit{$p$} and \textit{$q$} are vectors of \textit{${\mathbb{R}}^n$}; \textit{$\alpha$} and \textit{$\beta$} are scalars and  \textit{$q^tx+\alpha>0,\ \forall i=1,\dots ,k,\ \ \forall x\ \in X$}.

\subsection{Properties}
\begin{itemize}
\item If an optimal solution for a linear fractional program exists, then an extreme point optimum exists. Furthermore, every local minimum is a global minimum. 
\item A procedure that moves from one extreme point to an adjacent one is a viable approach for solving such a problem
\end{itemize}

\noindent \textbf{Convex-Simplex Method\textit{}}

\noindent Because of the special structure of the objective function $z$, the convex-simplex method simplifies into a minor modification of the simplex method of linear programming. We present in this section a method credited to Cambini et al. \cite{50} for solving $(FP)$:\textit{}

\noindent \textbf{Initialization Step: }Find a starting basic feasible solution $x_0$ to the system $Ax=b,\ x\ge 0$. From the corresponding tableau represented by $x_B+{{(A}^B)}^{-1}A^Nx_N={{(A}^B)}^{-1}b$. Let $r=1$ and go to the Main Step.\textit{}

\noindent \textbf{Main Step\textit{}}

\begin{enumerate}
\item \textbf{\textit{ }}Compute the vector  \textit{${\gamma }^t_N={{\nabla }_Nz(x_r)}^t-{{\nabla }_Bz(x_r)}^t{{(A}^B)}^{-1}A^N$}.

\item  If \textit{${\gamma }_N\le 0$}, stop; the current point \textit{$x_r$} is an optimal solution ,

\item  Otherwise, go to Step 2.

\item  Let  \textit{${\gamma }_j=max\left\{{\gamma }_i,\ i\in N\right\}$}.
\[\frac{{\hat{b}}_s}{{\hat{a}}_{sj}}={\mathop{min}_{1\le i\le m} \left\{\frac{{\hat{b}}_i}{{\hat{a}}_{ij}};\ y_{ij}>0\right\}\ }\] 
\end{enumerate}
where : \textit{$\hat{b}={{(A}^B)}^{-1}b$}, \textit{${\hat{a}}_j={{(A}^B)}^{-1}A^j$};  Go to Step 3

\noindent 

\begin{enumerate}
\item  Replace the variable \textit{$x_{B_s}$} by the variable \textit{$x_j$}, update the tableau correspondingly by pivoting at \textit{${\hat{a}}_{sj}$}. Let the current  solution be \textit{$x_r$}, \textit{$r=r+1$} and go to Step 1.
\end{enumerate}

\noindent We consider now a problem in which $k$ objective functions to be maximized. Each objective function is the ratio of two linear functions and subject to linear constraints. Such problems are called multiple objective linear fractional programming problems and can be stated as follows: \textit{}
\[(P)\left\{ \begin{array}{c}
Max\ z_i\left(x\right)=\frac{p^t_ix+{\alpha }_i}{q^t_ix+{\beta }_i};\ \ \forall i=\overline{1,k} \\ 
x\in D=X\cap{\mathbb{Z}}^n \end{array}
\right.\] 
where $X$ is a non-empty compact polyhedron defined by   $X=\left\{x\in {\mathbb{R}}^n|Ax=b,\ \ \ x=0\right\}$ ;\textit{}, \textit{ $A$} is an \textit{$m\times n$} matrix and \textit{b} is a vector of \textit{${\mathbb{R}}^m$}, \textit{$p_i$} and \textit{$q_i,\ \ \forall i=\overline{1,k}$}, are vectors of \textit{${\mathbb{R}}^n$}, \textit{$\alpha^i$} and \textit{$\beta^i$} are scalars, \textit{$q^t_ix+{\beta }_i>0,\ \forall i=1,\dots ,k,\ \ \forall x\ \in X$}.
\begin{definition}
\noindent A point $x\in D$ is called an efficient solution, or Pareto-optimal solution for ($P)$, if and only if there does not exist another point $y\in D$ such that $z_i\left(y\right)=z_i(x)$  for all $i\in \left\{1,\dots ,k\right\}$  and $z_i\left(y\right)>z_i(x)$ for at least one index  $i\in \left\{1,\dots ,k\right\}$. Otherwise $x$ is not efficient and the vector $z(y)$ dominates the vector $z(x)$, where $z=(z_1,,\dots ,z_k)$.
\end{definition}
\noindent The set of all integer efficient solutions of $(P)$ is denoted by $XE$.

\noindent The problem that we propose to solve consists in optimizing a new linear fractional function over the efficient set $XE$ of $(P)$.  This last set being non convex, the considered problem belongs to the class of global optimization problems. The corresponding mathematical program is written as follows:\textit{}
\[(PE)\left\{ \begin{array}{c}
Max\ \psi (x)=\frac{c^tx+\lambda }{d^tx+\mu } \\ 
x\in XE \end{array}
\right.\] 
where  $c,\ d\ \in {\mathbb{R}}^n$ and $\lambda ,\mu \ \in \mathbb{R}$ with a positive denominator  $d^tx+\mu ,\ \ \forall x\in X$.\textit{}

\noindent Let us denote by $({PE}^l$)  the master program on which relies the method at each stage l:  \textit{}
\[({PE}^l)\left\{ \begin{array}{c}
Max\ \psi (x)=\frac{c^tx+\lambda }{d^tx+\mu } \\ 
x\in X_l \end{array}
\right.\] 

\begin{enumerate}
\item  \textit{$X_0=X\ and\ X_l$} defined below;

\item  \textit{$x^*_l$ }is the integer solution found during solving\textit{ $({PE}^l$) }using eventually branch and bound ;

\item  \textit{$x_{opt}$ }is the best efficient solution of\textit{ $({PE}^l)$ }found till step\textit{ $l$ }and\textit{ ${\psi }_{opt}$ }its corresponding criterion value ;

\item  \textit{By $B_l$ (${respectively\ N}_l$ ), }we mean the set index of basic (respectively non basic) variables of\textit{ $x^*_l\ $};\textit{}

\item $\bar{\gamma}_i$ the reduced gradient vector of the $i$th objective. It
is defined by: $\bar{\gamma}_{i}=\bar{\beta}_{i}\bar{p}_{i}-\bar{\alpha}_{i}\bar{q}_{i}$, where $\bar{\beta}_{i}$, $\bar{p}_{i}$, $\bar{\alpha}_{i}$ and $\bar{q}_{i}$  are updated values obtained from the optimal simplex tableau of $(PE^l)$;

\item \textit{ ${\mathit{\Delta}}_l=\left\{j\in N_l|\ \exists \ i=\overline{1,k},\ {\bar{\gamma} }_i>0\right\}\cup \left\{j\in N_l|{\bar{\gamma} }_i=0\ for\ all\ i=\overline{1,k}\right\}$} describes the set of all the improving directions for the criteria of the program \textit{$(P)$};

\item \textit{ }Cut of type\textit{ I }
\[\sum_{j\in {\mathit{\Delta}}_l}{x_j\ge 1}\ \] 

to remove non efficient solutions without having to enumerate them and the corresponding set \textit{$X^1_{l+1}=\left\{x\in X_l|\sum_{j\in {\mathit{\Delta}}_l}{x_j\ge 1}\right\}\ $};

\item  Cut of type\textit{ II }is constructed according to the following inequality : $$\psi \left(x\right)\ge \ {\psi }_{opt}\ $$

to allow removing uninteresting points regarding optimality and the corresponding set \textit{$X^2_{l+1}=\left\{x\in X_l|\psi \left(x\right)\ge {\psi }_{opt}\right\}\ $}  ;
\item  The resulting set from \textit{$X^1_{l+1}$} and \textit{$X^2_{l+1}$} is given by : \textit{$X_{l+1}=X^1_{l+1}\cup X^2_{l+1}$};\textit{}

\item \textit{ }The ideal point \textit{${id}^l=({id}_1,\dots ,{id}_k)$ }for the program\textit{$\ ({PE}^l$) }whose coordinate are given by\textit{ ${id}_i={max \left\{z_i(x)/x\in X_l\right\},\ \ i=\overline{1,k}\ }$}.\textit{}
\end{enumerate}
\noindent \textbf{Efficiency test\textit{}}

\noindent There are two options to test the efficiency of an integer solution found at stage $l,\ x^*_l$:

\begin{enumerate}
\item  Update\textit{ }the\textit{ }set of potentially efficient solution \textit{${XE}_l$} by introducing\textit{ $x^*_l$.} The solution \textit{$x^*_l$ }is not efficient if its criterion vector is dominated by at least one of those in\textit{ ${XE}_l$}.\textit{}

\item \textit{ }Characterization of Pareto optimal solutions: ~Given  a  point  \textit{$x^*_l\in D$} and let

\noindent 
\[(Q)\ \left\{ \begin{array}{c}
{max e^ts\ } \\ 
st:z\left(x\right)-s=z\left(x^*_l\right) \\ 
x\in D,\  \\ 
s\ge 0 \end{array}
,\right\}\] 

where $e\ $is a vector column of ones.\textit{}

\noindent \textit{}

\begin{theorem}\cite{51}
\noindent $x^*_l$ is efficient if and only if $(Q)$ has a maximum value of zero. Otherwise (the maximum value of  $(Q)$ is finite nonzero), the obtained solution is efficient.
\end{theorem}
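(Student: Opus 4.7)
The plan is to establish the equivalence by two direct arguments, each translating the definition of efficiency into a statement about the optimal value of $(Q)$. The key observation is that $(Q)$ asks precisely whether starting from $z(x^*_l)$ one can move to a componentwise no-worse and somewhere strictly better criterion vector in $D$; this is the literal negation of Pareto optimality, so the equivalence is natural and should be reached without any deep machinery. I would handle the three claims in turn: necessity, sufficiency, and the structure of an optimum when the value of $(Q)$ is strictly positive.

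For the forward direction (\emph{efficient} $\Rightarrow \max(Q)=0$), I would first note that $(x^*_l,0)$ is feasible for $(Q)$, giving $\max(Q)\ge 0$. For the reverse inequality, I would assume toward contradiction that some feasible $(x,s)$ attains $e^t s>0$. Since $s\ge 0$ and $e^t s>0$, at least one $s_i>0$, so the equality constraint $z(x)-s=z(x^*_l)$ yields $z_i(x)>z_i(x^*_l)$ while $z_j(x)\ge z_j(x^*_l)$ for every $j$. By Definition~1 this contradicts the efficiency of $x^*_l$, so $\max(Q)=0$.

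For the reverse direction, I would assume $\max(Q)=0$ and suppose, again for contradiction, that $x^*_l$ is not efficient. Then there exists $y\in D$ with $z(y)\ge z(x^*_l)$ componentwise and strict inequality in at least one coordinate. Setting $s:=z(y)-z(x^*_l)$ gives a feasible pair $(y,s)$ for $(Q)$ with $s\ge 0$, $s\neq 0$, hence $e^t s>0$, contradicting $\max(Q)=0$. Finally, for the \emph{otherwise} clause, I would suppose $(Q)$ attains a finite positive optimum at some $(\bar{x},\bar{s})$ and show that $\bar{x}$ itself is efficient: if some $y\in D$ dominated $\bar{x}$, the pair $\bigl(y,z(y)-z(x^*_l)\bigr)$ would be feasible for $(Q)$ with objective value $e^t\bigl(z(y)-z(x^*_l)\bigr)>e^t\bar{s}$, contradicting the optimality of $(\bar{x},\bar{s})$.

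The main technical delicacy is not in the logical chain, which is classical in the spirit of Benson's test, but in justifying that $(Q)$ actually attains its supremum whenever that supremum is finite, so that the phrase \emph{the obtained solution} in the statement is well defined. This should follow from the fact that $D$ is a discrete subset of the compact polyhedron $X$ together with the continuity of each $z_i$ on $X$ (ensured by the standing assumption $q_i^t x+\beta_i>0$). Importantly, the fractional form of the objectives does not interfere with any step of the argument, because $(Q)$ only compares values of $z$ and never exploits linearity of the $z_i$ beyond what is implicit in the equality constraint.
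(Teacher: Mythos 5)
The paper states this theorem without any proof of its own, simply citing Benson (1978), so there is no in-paper argument to compare against; judged on its own, your proof is correct and is the standard argument for Benson's efficiency test. Both directions and the \emph{otherwise} clause (that the $x$-component of an optimizer of $(Q)$ with positive value is itself efficient) are handled properly, including the observation that $\bigl(y,\,z(y)-z(x^*_l)\bigr)$ strictly improves the objective whenever $y$ dominates the current optimizer. Two minor remarks: attainment of the maximum is most cleanly justified by noting that $s$ is determined by $x$ through the equality constraint, so the feasible set of $(Q)$ is in bijection with the finite set $\{x\in D: z(x)\ge z(x^*_l)\}$ (finiteness of $D=X\cap\mathbb{Z}^n$ with $X$ compact suffices, no continuity argument needed); and you correctly read the paper's garbled definition of efficiency ($z_i(y)=z_i(x)$ should be $z_i(y)\ge z_i(x)$) and the ambiguous phrase \emph{the obtained solution} in the intended way.
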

\end{enumerate}
\section{Principal of the method}
The proposed algorithm generates the optimal solution of \textit{(PE)} without having to enumerate $XE$ (recall that $XE$ is the efficient set of \textit{(P)}).  Based on branch and bound technique, the method is reinforced by efficient cuts and additional saturating tests allowing a smart search for the optimal solution.  We start by solving the program ($PE^l$ ) using the simplex method at
step $l$ of the algorithm (eventually dual simplex method). Then, to catch how the criteria vectors move
from basis to basis, $k$ lines are added to the basic simplex tableau and reduced costs are calculated in respect of the corresponding basis.  If the obtained solution is non integer, then we still imposing
integrity restrictions on the original variables of the program till getting integer ones. Once an integer
solution $x_l^*$ is achieved, new cuts are established and added to the current simplex tableau which allow reduce the search area considerably (containing non efficient and non interesting solutions for (PE)).
We consider henceforth two types of nodes, those relative to branching process (type 1) and others
to efficient cuts (type 2). So, a node of type 2 is pruned if no improvement of the criteria can be done
along the remaining domain or if an efficient solution is reached at a stage $l$.  A node of type 1 is
fathomed if $\psi_opt$ the best value of $\psi$ obtained till stage $l$ is greater than or equal to value of $\psi$ at that
node, even the corresponding solution is non integer or the domain becomes unfeasible.

\subsection{Algorithm}

\noindent \textbf{Step 0 : } Solve according to $\psi(x)$ with $x\in X$ till optimality (using method in \cite{52}) and then test the resulting solution for efficiency.  If the optimal solution is efficient then Stop, else go to step 1.
\noindent \textbf{Step 1 (initialization): } index $l\ =\ 0$ and the optimal value of the objective function ${\psi }_{opt}\mathrm{=}\mathrm{-}\mathrm{\infty }$\textit{ } to which it corresponds no optimal solution yet ($x_{opt}$  unknown at the beginning). ${XE}_l$denotes the set of potentially efficient solutions of program $(P)$ at each stage $l$ with ${XE}_0\boldsymbol{=}\boldsymbol{\emptyset }$.\textbf{ \textit{}}

\noindent \textbf{Step 2 (main step):} While there is no saturated node in the tree search, solve (${PE}^l$ ) using dual simplex method, except for the initial program$\ ({PE}^0)$ where the Cambini and Martein's method is used.  Go to Step3.1.\textit{}

\noindent \textbf{Step 3 (tests):\textit{}}

\noindent \textbf{Step 3.1 Feasibility test:}  If program (${PE}^l$) is infeasible, then stop and the node l is saturated. Else, let $x^*_l$ be the obtained solution, if ${\psi }_{opt}$ $\ge \psi (x^*_l)$, the node l is fathomed, else, go to step3.2 ;\textbf{\textit{}}

\noindent \textbf{Step 3.2  Integrity test:}  If${\ x}^*_l$ is integer, update ${XE}_l$ and go to step3.3. Else go to step 4 ;\textbf{\textit{}}

\noindent \textbf{Step 3.3  Efficiency test:}  If ${\ x}^*_l\ $is not kept within ${XE}_l$, ${\ x}^*_l\ \ $is not efficient, go to step 5. Else, solve program$(Q)$ for efficiency. If ${\ x}^*_l$ is efficient then update eventually ${\psi }_{opt}$and $x_{opt}$ : ${\psi }_{opt}=\psi (x^*_l)$ and $x_{opt}=x^*_l$, the node l is pruned since no improvement of $\psi $ further. Else, let $y_l\ $be the optimal solution of program$(Q)$, update if necessary ${\psi }_{opt}$, $x_{opt}\ $ as before and the set ${XE}_l$ as well. Go to step 5.\textit{}

\noindent \textbf{Step 4 (branching process):} Choose one coordinate $x_j$  of $x^*_l$ such that $x_j$ = ${\alpha }_j$ , with ${\alpha }_j$ a fractional number. Then, split the program (${PE}^l$) into two sub programs, by adding the constraints $x_j\le \left\lfloor {\alpha }_j\right\rfloor $ to obtain $\left({PE}^{l_1}\right)$ and $x_j\ge \left(\left\lfloor {\alpha }_j\right\rfloor +1\right)$ to obtain $\left({PE}^{l_2}\right)$ such that $l_1>l+1$, $l_2>l+1$, and $l_1\neq l_2$. This has the effect to create two new branches in the search tree. Go to step 2.\textit{}

\noindent In fact, since the search tree is treated according to the principle ``depth first'', the cut of type II $\psi (x)\ \ge \ {\psi }_{opt}$ is added in the second branch $l_2$.\textit{}

\noindent \textbf{Step 5 (efficient cut):}  Construct the set ${\mathit{\Delta}}_l$ and compute the ideal point ${id}_l$.  \textit{}

\noindent If ${\mathit{\Delta}}_l=\emptyset $ or ${id}_l$ is dominated by one vector in ${XE}_l$, the node $l$ is fathomed since no efficient solution exists afterward. Otherwise, construct the set $X^1_{l+1}\ $by adding the cut of type I. Go to step 2.

\section{Illustrative example}
\noindent In this section, we illustrate the different steps of the proposed algorithm by solving the following example:\textit{}
\[\left(PE\right)\left\{ \begin{array}{c}
Max\ \psi \left(x\right)=\frac{66x1+50x2+96x3+56x4+16x5+\ 32x6+\ 2}{38x1+\ 40x2+\ 12x3+\ 47x4\ +\ \ 52\ x5+78x6+13} \\ 
 \\ 
x\in XE \end{array}
\right.\] 
\textit{}

\noindent \textit{}

\noindent Where, $XE$ is the integer efficient set of the following problem : \textit{}

\noindent \textit{}

\noindent (P)$\left\{s.t. \begin{array}{c}
Max\ z1\left(x\right)=\ \frac{99x1+\ \ \ \ 89x2+\ \ \ \ 43x3+\ \ \ \ 99\ x4+\ \ \ 33x5+\ \ \ \ 77x6+9}{21\ x1+\ \ \ \ 7x2+\ \ \ \ 69x3+\ \ \ \ 71x4+\ \ \ \ 70x5+\ \ \ \ 70x6+17} \\ 
Max\ z2\left(x\right)=\frac{67x1+\ \ \ \ 36x2+\ \ \ \ 73x3+\ \ \ \ 48\ x4+\ \ \ 60x5+\ \ \ \ 39x6-8}{39\ x1+\ \ \ 43x2+\ \ \ \ 51\ \ x3+\ \ 30\ \ x4+\ \ 91x5+\ \ \ \ 14x6+18} \\ 
 \\ 
18x1+\ 5x2+\ \ \ 14x3+\ \ \ \ 6x4+\ \ \ \ 12x5+\ \ \ 5x6\le 86 \\ 
1x1+\ \ \ 22x2+\ \ \ \ 21\ x3+\ \ 3\ x4+\ \ \ 28x5+\ \ \ \ 24x6\le 66 \\ 
9x1+\ \ \ \ \ 2x2+\ \ \ \ 17x3+\ \ \ 6\ x4+\ \ 21\ x5+\ \ \ \ 13x6\le 78 \\ 
x\ge 0,\ integer \end{array}
\right.$\textit{}

\noindent \textit{}

\noindent We first solve the problem (PE) till optimality using the method of Seshan and Tikekar\cite{52} Cambini{\dots}, we obtain thus the optimal solution $x^*=(0,0,3,0,0,0)$ which is non efficient. We obtain from the efficiency test the efficient solution $y*=(0,\ 1,\ 0,12,\ 0,\ 0)$. \textit{}

\noindent Then set $x\_opt=(0,\ 1,\ 0,12,\ 0,\ 0)$, $\psi \_opt=1.1734$, ${XE}_0=\left\{(0,\ 1,\ 0,12,\ 0,\ 0)\right\}$ and \textit{}
\[{SND}_0=\left\{(1.4680\ \ \ \ 1.4347)\right\}.\] 
\textit{}

\noindent \textbf{Node 0: }$List=\{0\}$.\textbf{ }\textit{}

\noindent Solve the problem (${PE}^0)\ \left\{ \begin{array}{c}
Max\ \psi \left(x\right)=\frac{66x1+50x2+96x3+56x4+16x5+\ 32x6+\ 2}{38x1+\ 40x2+\ 12x3+\ 47x4\ +\ \ 52\ x5+78x6+13} \\ 
 \\ 
x\in X \end{array}
\right.$\textit{}

\noindent \textit{}

\noindent The optimal solution obtained is $x^*_0=(0,\ 0,\ \frac{22}{7},\ 0,\ 0,\ 0)$ which is not integer. Calculate $\psi \left(x^*_0\right)=5.9887$ which is greater then $\psi \_opt$, so we continue to explore the remaining domain. We should then split the original problem into two sub-problems: (${PE}^1)$ and (${PE}^2)$. In the first we add the constraint: $x_3\le 3$ and in the second the constraint:$x_3\ge 4$.\textit{}
\[List=\{1,2\}\] 
\textbf{Node 1: }solve (\textit{${PE}^1)$}, the obtained solution is: \textit{$x^*_1=(0,\ 0,\ 3,\ 0,\ 0,\ 0)$} which is integer. 

\noindent \textit{B}${}_{1}$= (8,3,9,7), \textit{N}${}_{1}$=(1,2,4,5,6,10). Calculate \textit{$\psi \left(x^*_1\right)=5.9184$} which is greater then \textit{$\psi \_opt$}, so we continue to explore the remaining domain. We update \textit{${XE}_1$} and \textit{${SND}_1$}, the solution \textit{$x^*_1\ $} is dominated by (0,1,0,12,0,0). So construct the set \textit{${\Delta }_1=\{1,2,4,6\}$} and compute 

\noindent \textit{${id}^1=(7.2632,\ \ 1.7566)$} which is not dominated by any vector in \textit{${SND}_1$} . Add the cut \textit{$x1+x2+x4+x6\ge 1$} and use dual simplex method to solve the new problem. The optimal solution found is: \textit{$x^*_2=(1,0,3,0,0,0)$} which is integer. \textit{$\psi \left(x^*_2\right)=4.0920$} which is greater than \textit{$\psi \_opt$}. We continue to explore the remaining domain. By updating \textit{${XE}_2$} and \textit{${SND}_2$}, we notice that \textit{$x^*_2$} is not efficient. Construct \textit{${\Delta }_2=\{2,6,10,11\}$} and compute \textit{${id}^2=(7.2632,\ \ 1.7566)$} which is not dominated by any vector in \textit{${SND}_2$}. Add the cut : \textit{$x2+x6+x10+x11\ge 1$} and by using the dual simplex method we obtain the following solution:\textit{$x^*_3=(19/42,\ \ 23/42,\ \ 107/42,\ 0,\ 0,\ 0)$}. \textit{$\psi \left(x^*_3\right)=3.6751$} which is greater than \textit{$\psi \_opt$} , we continue to explore the remaining domain. We split the actual problem into two sub-problems corresponding to \textbf{node 4} and \textbf{node 5. }\textit{$List=\{4,5,2\}$}.\textbf{}

\noindent \textbf{Node 4: }Add the constraint \textit{$x_1\le 0$}, the optimal solution  thus obtained using the dual simplex method is: \textit{$x^*_4=(0,\ 21/40,\ 101/40,\ 19/40,\ 0,\ 0)$} which is not integer, \textit{$\psi \left(x^*_4\right)=3.4315$} which is greater than \textit{${\psi }_{opt}$}so we continue to explore the remaining domain. We split this problem into two new ones: (\textit{${PE}^6)$} and \textit{$({PE}^7)$}. \textit{$List=\{6,7,5,2\}$}.\textbf{}

\noindent \textbf{Node 6: }add the constraint \textit{$x_2\le 0$}, \textit{$x^*_6=\left(0,\ 0,\ 2,\ 1,\ 0,\ 0\right)$} which is integer and not efficient. Update \textit{${XE}_6$} and \textit{${SND}_6$} . \textit{$\psi \left(x^*_6\right)=2.9762$} which is greater than \textit{${\psi }_{opt}$} so we continue to explore the remaining domain. \textit{N}${}_{6}$=$\{$5, 6, 11, 12, 13, 14$\}$, \textit{B}${}_{6}$=$\{$2, 3, 9, 7, 1, 10, 4, 8$\}$. Construct set \textit{${\Delta }_6=\left\{6,\ 11,\ 12,\ 14\right\}$} and compute \textit{${id}^6=\left(1.3787,\ \ 1.7566\right)$} which is not dominated by any vector in \textit{${SND}_6$}. Add the efficient cut : \textit{$x6+x11+x12+x14\ge 1$}.The resulting optimal solution is :\textbf{ }\textit{$x^*_7=\left(0,\ 0,\ \frac{43}{15},\ \frac{\ 29}{15},\ 0,\ 0\right)$} which is not integer. \textit{$\psi \left(x^*_7\right)=2.7878$} which is greater than \textit{${\psi }_{opt}$}, so we continue to explore the remaining domain. We split this problem into two new ones : (\textit{${PE}^8$}) and (\textit{${PE}^9$}). \textit{$List=\left\{8,\ 9,\ 7,\ 5,\ 2\right\}$}.

\noindent \textbf{Node 8:} add the constraint \textit{$x_3\le 2$}, the optimal solution found\\ \textit{$x^*_8=(0,\ 0,\ 2,\ 3/2,\ 0,\ 0)$} which is not integer.  \textit{$\psi \left(x^*_8\right)=\ 2.5860$} which is greater than \textit{${\psi }_{opt}$} so we continue to explore the remaining domain. We split this problem into two new ones: (\textit{${PE}^{10})$} and \textit{$({PE}^{11})$}. \textit{$List=\{10,11,9,7,5,2\}$}.

\noindent \textbf{Node 10: }add the constraint \textit{$x_4\le 1$}, the optimal solution thus found is \textit{$x^*_{10}=(0,\ 0,\ 2,\ 1,\ 0,\ \ 1/4)$}. \textit{$\psi \left(x^*_{10}\right)=2.4928$} which is greater than \textit{${\psi }_{opt}$} so we continue to explore the remaining domain. We split this problem into two new ones: (\textit{${PE}^{12})$} and \textit{$({PE}^{13})$}. \textit{$List=\{12,13,11,9,7,5,2\}$}.\textbf{}

\noindent \textbf{Node 12: }add the constraint \textit{$x_6\le 0$} , the optimal solution found is \textit{$x^*_{12}\boldsymbol{=}(0,\ 0,\ 1,\ 1,\ 0,\ 0)$} which is integer. \textit{B}${}_{12}$=$\{$12,3,1,7,2,10,,8,11,9,6,16$\}$,\\ \textit{N}${}_{12}$=$\{$5,13,1,15,17,18$\}$. \textit{$\psi \left(x^*_{12}\right)=\ 2.1389$} which is greater than \textit{${\psi }_{opt}$} so we continue to explore the remaining domain. Update \textit{${XE}_{12}$}\textbf{ }and \textit{${SND}_{12}$}\textbf{. }\textit{$x^*_{12}$} is not efficient . Construct \textit{${\Delta }_{12}=\{15,17,18\}$} and compute \textit{}

\noindent id${}^{12}$=\textit{$(1.2273,\ 1.1414)$} which is dominated. We prune this node.\textbf{}

\noindent \textbf{Node 13: }add the constraint \textit{$x6\ge 1$}, the optimal solution thus found is \textit{$x^*_{13}=$}(0, 0,13/7,           1, 0, 1) which is not optimal. \textit{$\psi \left(x^*_{13}\right)=\ 1.6738$} which is greater than \textit{${\psi }_{opt}$} so we continue to explore the remaining domain. Split this node into two new nodes 14 and 15. 
\[List\ =\{14,15,11,9,7,5,2\}.\] 
\textbf{Node 14: }add the constraint\textbf{ }$x3\le 1$, the optimal solution thus found is $x^*_{14}=(0,\ 0,\ 1,\ 1,\ \ 0,\ 7/4)$ which is not integer. Compute $\psi \left(x^*_{14}\right)=\ 1.0072$ which is less than ${\psi }_{opt}$ so we prune this node.

\noindent \textbf{Node 15: }add the constraint $x3\ge 2$. The optimal solution is $x^*_{15}=(0,\ 0,\ 2,\ 0,\ 0,\ 1)$ which is integer and not efficient.  Compute $\psi \left(x^*_{15}\right)=\ 1.9652$ which is greater than ${\psi }_{opt}$ so we continue to explore the remaining domain. N${}_{15}$=$\{$2,5,8,13,18,19$\}$, B${}_{15}$=$\{$12,3,1,7,16,10,4,14,11,9,6,15,17$\}$, ${\Delta }_{15}=\emptyset $. ${XE}_{15}$ and ${SND}_{15}$ remains as before with no change.  As ${\Delta }_{15}$ is empty, we break this node. \textit{}

\noindent \textbf{Node 11: }add the constraint $x_4\ge 2$ , we continue till unfeasibility.

\noindent \textbf{Node 9: }add the constraint \textit{$x_3\ge 3$} , we continue till unfeasibility.

\noindent \textbf{Node 7: }add the constraint \textit{$x_2\ge 1$}, the optimal solution  is \textit{$x^*_7=(0,0,2,2,0,0)$} which is integer. \textit{N}${}_{7}$=(1,2,5,6,16,17), \textit{B}${}_{7}$=(12,3,13,7,14,10,4,8,11,9,15). Compute \textit{$\psi \left(x^*_7\right)=2.3359$} which is greater than \textit{${\psi }_{opt}$} so we continue to explore the remaining domain. \textit{${\Delta }_7=\{1,2,6,16,17\}$} add the efficient cut \textit{$x_1+x_2+x_6+x_{16}+x_{17}\ge 1$}. The optimal solution thus found is \textit{$x^*_{16}=(0,\ 0,\ 2,\ 3,\ 0,\ 0)$}. \textit{$\psi \left(x^*_{16}\right)=2.0337$} which is greater than \textit{${\psi }_{opt}$} so we continue to explore the remaining domain. No update for \textit{${XE}_7$} and \textit{${SND}_{16}$}, the integer solution \textit{$x^*_{16}$} is not efficient. Compute \textit{${\Delta }_{16}$}=$\{$6,1,16,18$\}$, \textit{N}${}_{16}$=$\{$5,6,13,14,16,18$\}$, \textit{B}${}_{16}$=$\{$12,3,17,7,2,10,4,8,11,9,15,1$\}$. \textit{${Id}_{16}$} is not dominated by any vector in \textit{${SND}_{16}$}.  Add  the cut \textit{$x_6+x_1+x_{16}+x_{18}\ge 1$}, the optimal solution is \textit{$x^*_{17}=(0,0,2,4,0,0)$} which is integer.\textit{$\ \psi \left(x^*_{17}\right)=1.8578\ $}which is greater than \textit{${\psi }_{opt}$} so we continue to explore the remaining domain. \textit{${XE}_{17}$} and \textit{${SND}_{17}$} with no change. Construct \textit{${\Delta }_{17}=\{2,6,19\}$}. Compute \textit{${Id}_{17}$}, it is not dominated by any vector in \textit{${SND}_{17}$}. Add the cut \textit{$x_2+x_6+x_{19}\ge 1$}. The optimal solution is \textit{$x^*_{18}=(0,\ 0,\ 2,\ 5,\ 0,\ 0)$} which is integer. \textit{$\psi \left(x^*_{18}\right)=1.7426\ $} which is greater than \textit{${\psi }_{opt}$} so we continue to explore the remaining domain. \textit{${\Delta }_{18}=\{6,14,16,20\}$}. Compute \textit{${Id}_{18}$}, which is not dominated. we continue till founding \textit{$x^*_{20}=(0,\ 0,\ 2,\ 7,\ 0,\ 0)$} which is not efficient (effcient test) but we found an efficient  one from resolving \textit{$(PE\_x^*_{20})\ )$}  which is \textit{$y^*=(4,1,0,0,0,1)$} . Update \textit{${\psi }_{opt}$} and  \textit{$x_{opt}$}.

\noindent \textit{${\boldsymbol{x}}_{\boldsymbol{opt}}\boldsymbol{=(}\boldsymbol{4},\boldsymbol{1},\boldsymbol{0},\boldsymbol{0},\boldsymbol{0},\boldsymbol{1}\boldsymbol{)}$}\textbf{ }and\textbf{ }\textit{${\boldsymbol{\psi }}_{\boldsymbol{opt}}\boldsymbol{=}\boldsymbol{1},\boldsymbol{2297}.$}  \textit{${\Delta }_{20}=\{6,14,16,22\}$} and compute \textit{${Id}_{20}$} which is not dominated. We continue in this manner until we get \textit{${\boldsymbol{x}}_{\boldsymbol{opt}}=(4,0,0,2,0,0)$}, \textit{${\boldsymbol{\psi }}_{\boldsymbol{opt}}=1.4595$}.

\noindent \textbf{Node5: }add the cut $x_1\ge 1$ and continue till infeasibility.\textbf{\textit{}}

\noindent \textbf{Node 2: }add the cut $x_3\ge 4$ and continue till infeasibility.\textbf{\textit{}}

\noindent \textbf{Summary: }${\boldsymbol{x}}_{\boldsymbol{opt}}\boldsymbol{=}\boldsymbol{(}\boldsymbol{4}\boldsymbol{,\ }\boldsymbol{0}\boldsymbol{,\ }\boldsymbol{0}\boldsymbol{,\ }\boldsymbol{0}\boldsymbol{,\ }\boldsymbol{0}\boldsymbol{,\ }\boldsymbol{0}\boldsymbol{)}$\textbf{ }and\textbf{ }${\boldsymbol{\psi }}_{\boldsymbol{opt}}\boldsymbol{=}\boldsymbol{1}.\boldsymbol{6121}$\textbf{\textit{}}

\section{Theoretical results}
In order to justify the different steps of the proposed algorithm, the following results are established.  We denote by $D_l$ the set $D_l=X_l\cap {\mathbb{Z}}^n$.\textit{}

\begin{theorem} Suppose that ${\mathit{\Delta}}_l\neq \emptyset $ at the current integer solution $x^*_l$. If $x\neq x^*_l$ is an optimal solution of program (${PE}^l$) in domain $X_l$, then $x\in X_{l+1}$.
\end{theorem}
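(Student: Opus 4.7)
The plan is to bypass the type~I cut entirely and establish $x\in X_{l+1}$ through the type~II cut alone: it will suffice to verify the inequality $\psi(x)\ge \psi_{opt}$, for then $x\in X^2_{l+1}\subseteq X^1_{l+1}\cup X^2_{l+1}=X_{l+1}$. The hypothesis $\Delta_l\neq\emptyset$ will be used only to ensure that we are in the regime where the refined domain $X_{l+1}$ is actually formed; otherwise Step~5 of the algorithm would fathom node~$l$ and the claim would be vacuous.

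The argument proceeds in two short steps. First, I would trace the algorithmic context. Because node~$l$ has not been fathomed, it must have passed Step~3.1, whose fathoming criterion is $\psi_{opt}\ge \psi(x^*_l)$; passing Step~3.1 therefore yields $\psi_{opt}<\psi(x^*_l)$. If Step~3.3 subsequently updates $\psi_{opt}$ to $\psi(x^*_l)$, we only tighten this to an equality, so the weak inequality $\psi_{opt}\le \psi(x^*_l)$ survives in either case. Second, I would invoke the joint optimality of $x$ and $x^*_l$: since $x^*_l$ is returned in Step~2 as an optimal solution of the linear fractional program $(PE^l)$ over $X_l$, and since, by hypothesis, $x$ is also an optimum of the same program over the same feasible set, their objective values must coincide, i.e.\ $\psi(x)=\psi(x^*_l)$. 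Chaining the two facts gives $\psi(x)\ge \psi_{opt}$, which is exactly the type~II inequality defining $X^2_{l+1}$, and the conclusion follows.

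The main obstacle I expect is conceptual rather than technical: one has to explain why the type~I cut and the restriction $x\neq x^*_l$ appear in the statement at all, given that the type~II cut already carries the argument. The restriction $x\neq x^*_l$ is what makes the theorem non-trivial, asserting that every \emph{alternative} $\psi$-optimum on $X_l$ is retained when we move to $X_{l+1}$, while $\Delta_l\neq\emptyset$ merely fixes the algorithmic stage. In particular, no structural comparison between the reduced gradients $\bar{\gamma}_i$ defining $\Delta_l$ and the reduced gradient of $\psi$ at the basis $B_l$ is needed; the argument is purely through the value $\psi_{opt}$ of the incumbent and the fact that both $x$ and $x^*_l$ attain the same optimum of $(PE^l)$.
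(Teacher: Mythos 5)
Your argument leans entirely on reading $X_{l+1}=X^1_{l+1}\cup X^2_{l+1}$ literally as a union, so that verifying the type II inequality alone would place $x$ in $X_{l+1}$. That reading does not survive contact with the rest of the paper: in Step 5 and throughout the worked example the type I cut $\sum_{j\in\Delta_l}x_j\ge 1$ is appended as a constraint to the current tableau and the dual simplex is re-run, so a point violating it is removed from the search region no matter what its $\psi$-value is; the region actually explored after stage $l$ is the \emph{intersection} of the two cut sets, and the $\cup$ in item 9 of Section 2 is a slip for $\cap$. Consequently the burden of the theorem is precisely the part you declare unnecessary: one must show that an optimal $x$ satisfies the type I cut, i.e.\ has $x_j\ge 1$ for some $j\in\Delta_l$. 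The paper does this by contradiction: if $x_j=0$ for every $j\in\Delta_l$, then writing each $z_i$ in terms of the nonbasic variables at $x^*_l$ and using $\bar{\gamma}^i_j=\bar{\beta}^i\bar{c}^i_j-\bar{\alpha}^i\bar{d}^i_j\le 0$ for all $j\in N_l\setminus\Delta_l$ (with strict inequality for at least one criterion) yields $z_i(x)\le z_i(x^*_l)$ for all $i$ and $<$ for some $i$, so $z(x^*_l)$ dominates $z(x)$ and $x$ cannot be the efficient optimum sought. This reduced-gradient domination argument is the entire mathematical content of the paper's proof and is absent from yours.

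Two further problems. First, you read ``optimal solution of $(PE^l)$ in domain $X_l$'' as an alternative maximizer of $\psi$ over the polyhedron $X_l$; the paper's proof opens with ``let $x\ne x^*_l$ be an \emph{integer} solution in $X_l$'' and derives ``$x$ is not efficient'' as the contradiction, so the intended hypothesis is that $x$ is an efficient integer point optimal for $(PE)$ over $D_l$ --- the theorem is the guarantee that the cuts never delete such a candidate, which is exactly how it is invoked in the termination proof (Theorem 6). Second, even within your own reading, the inequality $\psi_{opt}\le\psi(x^*_l)$ need not hold at the moment the type II cut is formed: when $x^*_l$ is not efficient, Step 3.3 may replace $\psi_{opt}$ by $\psi(y_l)$ for an efficient $y_l$ obtained from program $(Q)$ over the \emph{original} domain $D$, and such a $y_l$ can lie outside $X_l$ with $\psi(y_l)>\max_{x\in X_l}\psi(x)=\psi(x^*_l)$, in which case your chain $\psi(x)=\psi(x^*_l)\ge\psi_{opt}$ breaks.
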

\begin{proof}
\textit{Let }$x\ x_{l}^{}$\textit{ be an integer solution in 
domain }$X_{l}$\textit{ such that }$x \notin X_{l+1}$ then $
x\notin X_{l+1}^{1}\vee x\notin X_{l+1}^{2}$..

\begin{itemize}
\item \textit{If }$x \notin X_{l+1}^{1}$\textit{, then}$x \in \lbrace 
x \in X_{l}|\sum_{j \in N_{l}\backslash\Delta _{l}}{x_{j}\ge 1}\rbrace $
\textit{. Therefore, the coordinates of }$x$\textit{ check the 
following inequalities: }$\sum_{j \in \Delta _{l}}{x_{j}<1}$\textit{ and 
}$\sum_{j \in N_{l}\backslash\Delta _{l}}{x_{j}\ge 1}$\textit{. It 
follows that }$x_{j}=0$\textit{ for all }$j \in \Delta _{l}$\textit{
. Using the simplex table in }$x_{l}^{}$\textit{ , the following 
equality is supported by all criterion }$i\in \lbrace 1,\ldots ,k\rbrace $
\item \textit{From the simplex table corresponding to the optimal 
solution }$x_{l}^{}$\textit{, the criteria are evaluated by:}
\end{itemize}
\begin{center}$
z_{i}(x)=\frac{\sum_{j \in N_{l}}{\bar{c}_{j}^{i}x_{j}+\bar{\alpha 
}^{i}}}{\sum_{j\in N_{l}}{\bar{d}_{j}^{i}x_{j}+\bar{\beta }^{i}}} $
\textit{ for }$i \in \lbrace 1,\ldots ,k\rbrace $\end{center}

\textit{Where}

$\frac{\bar{\alpha }^{i}}{\bar{\beta }^{i}}=z_{i}(x_{l}^{})$\\

\textit{Then we can write }

\begin{center}$z_{i}(x)=\frac{\sum_{j\in N_{l}\backslash\Delta 
_{l}}{\bar{c}_{j}^{i}x_{j}+\bar{\alpha 
}^{i}}}{\sum_{j\in N_{l}\backslash\Delta 
_{l}}{\bar{d}_{j}^{i}x_{j}+\bar{\beta }^{i}}} $\textit{ for }$
i\in \lbrace 1,\ldots ,k\rbrace $\end{center}

\textit{In the other hand , }$\bar{\gamma }_{j}^{i}=\bar{\beta 
}^{i}\bar{c}_{j}^{i}-\bar{\alpha }^{i}\bar{d}_{j}^{i}\le 0$\textit{, 
for all index }$j \in  N_{l}\backslash\Delta _{l}$\textit{ and }$
\bar{\gamma }_{j}^{i}=\bar{\beta }^{i}\bar{c}_{j}^{i}-\bar{\alpha 
}^{i}\bar{d}_{j}^{i}<0$\textit{ for at least one criterion, implies 
that }$\bar{c}_{j}^{i}-\bar{\alpha }^{i}\bar{d}_{j}^{i}/\bar{\beta 
}^{i}$\textit{ for all }$j \in N_{l}\backslash\Delta _{l}$\textit{ 
because }$\bar{\beta }^{i}=d^{i}x_{l}^{}+\beta ^{i}>0$

\textit{ For all criterion }$i \in \lbrace 1,\ldots ,k\rbrace $\textit{. 
The decision variables being nonnegative, we obtain}

\textit{ }$\bar{c}_{j}^{i}x_{j}\le (\frac{\bar{\alpha 
}^{i}\bar{d}_{j}^{i}}{\bar{\beta }^{i}})x_{j}$\textit{ for all }$
j \in N_{l}\backslash\Delta _{l}$\textit{ and hence:}

$\sum_{j\in N_{l}\backslash\Delta _{l}}{\bar{c}_{j}^{i}x_{j}} \in \le 
\sum_{j \in N_{l}\backslash\Delta _{l}}{(\frac{\bar{\alpha 
}^{i}\bar{d}_{j}^{i}}{\bar{\beta 
}^{i}})x_{j}}\sum_{j \in N_{l}\backslash\Delta 
_{l}}{\bar{c}_{j}^{i}x_{j}}+\bar{\alpha }^{i}\in \le 
\sum_{j \in N_{l}\backslash\Delta _{l}}{(\frac{\bar{\alpha 
}^{i}\bar{d}_{j}^{i}}{\bar{\beta }^{i}})x_{j}}+\bar{\alpha }^{i}$\\

\textit{For any criterion }$z_{i}, i\in \lbrace 1,\ldots ,k\rbrace $
\textit{, the following inequality is obtained:}

$z_{i}(x)=\frac{\sum_{j\in N_{l}\backslash\Delta 
_{l}}{\bar{c}_{j}^{i}x_{j}}+\bar{\alpha 
}^{i}}{\sum_{j \in N_{l}\backslash\Delta _{l}}{(\frac{\bar{\alpha 
}^{i}\bar{d}_{j}^{i}}{\bar{\beta }^{i}})x_{j}}+\bar{\beta }^{i}}$\\

$\Rightarrow z_{i}(x)\le \frac{\sum_{j\in N_{l}\backslash\Delta 
_{l}}{(\frac{\bar{\alpha }^{i}\bar{d}_{j}^{i}}{\bar{\beta 
}^{i}})x_{j}}+\bar{\alpha }^{i}}{\sum_{j\in N_{l}\backslash\Delta 
_{l}}{\bar{d}_{j}^{i}x_{j}}+\bar{\beta }^{i}}$\\

$\Rightarrow z_{i}(x)\le \frac{\frac{\bar{\alpha }^{i}}{\bar{\beta 
}^{i}}(\sum_{j\in N_{l}\backslash\Delta 
_{l}}{\bar{d}_{j}^{i}x_{j}}+\bar{\alpha 
}^{i})}{\sum_{j_in N_{l}\backslash\Delta 
_{l}}{\bar{d}_{j}^{i}x_{j}}+\bar{\beta }^{i}}$\\

$\Rightarrow z_{i}(x)\le \frac{\bar{\alpha }^{i}}{\bar{\beta }^{i}}$\\

$ \Rightarrow z_{i}(x)\le z_{i}(x_{l}^{})$\\

\textit{Consequently, }$z_{i}(x)\le z_{i}(x_{l}^{})$\textit{ for 
all }$i_in \lbrace 1,\ldots ,k\rbrace $\textit{ and }$
z_{i}(x)<z_{i}(x_{l}^{})$\textit{ for at least one index }$i$
\textit{. Hence, }$z(x_{l}^{})$\textit{ dominates }$z(x)$
\textit{ and the solution }$x$\textit{ is not efficient.}

\begin{itemize}
\item \textit{If }$x\in X_{l+1}^{2}$\textit{, }$x$\textit{ is not 
optimal, contradiction, \ldots (**)}
\item \textit{From (*) and (**) ; we conclude that }$x\in X_{l+1}$
\end{itemize}
\end{proof}
\begin{theorem}
Let $x_l^*$ be the current integer solution of program $(PE^l)$, then if $x_l^*$ is efficient for program (P), then it is an optimal solution of program (PE) over $D_l$.
\end{theorem}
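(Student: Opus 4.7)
The plan is to combine the LP-optimality of $x_l^*$ for $(PE^l)$ with the assumed efficiency of $x_l^*$ for $(P)$, and then invoke a simple set-containment argument to restrict to the efficient integer points of $D_l$. First I would record the two defining facts about $x_l^*$: (i) by construction, $x_l^*$ is an optimal solution of $(PE^l)$, so $\psi(x_l^*)\geq \psi(x)$ for every $x\in X_l$; (ii) $x_l^*$ is integer and feasible for $X_l$, hence $x_l^*\in D_l = X_l\cap\mathbb{Z}^n$. Because $D_l\subseteq X_l$, the inequality in (i) restricts immediately to
\[
\psi(x_l^*)\;\geq\;\psi(x)\qquad\text{for all }x\in D_l.
\]

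Next I would interpret ``program $(PE)$ over $D_l$'' as maximizing $\psi$ over $XE\cap D_l$ (that is, the efficient points of $(P)$ lying in the current subdomain), since $(PE)$ itself is defined on $XE$. The hypothesis that $x_l^*$ is efficient for $(P)$ gives $x_l^*\in XE$, and combined with (ii) we obtain $x_l^*\in XE\cap D_l$. So $x_l^*$ is feasible for the restriction of $(PE)$ to $D_l$.

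Finally I would chain the two observations together. For any $x\in XE\cap D_l$ we have $x\in D_l$, whence $\psi(x)\leq \psi(x_l^*)$ by the inequality above; since $x_l^*$ itself belongs to $XE\cap D_l$, this shows
\[
\psi(x_l^*)=\max_{x\in XE\cap D_l}\psi(x),
\]
which is exactly the statement that $x_l^*$ is an optimal solution of $(PE)$ over $D_l$. There is essentially no hard step: the argument is a one-line containment $XE\cap D_l\subseteq D_l\subseteq X_l$ combined with the optimality of $x_l^*$ on the larger set $X_l$. The only point that deserves care, and which I would make explicit, is that $x_l^*$ being optimal for the continuous relaxation $(PE^l)$ on $X_l$ automatically makes it optimal on the integer subset $D_l$ because $x_l^*$ is itself integer; this is what justifies pruning the node in the algorithm as soon as $x_l^*$ passes the efficiency test.
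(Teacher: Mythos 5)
Your proof is correct and rests on the same key fact as the paper's: that $x_l^*$, being the optimum of the relaxation $(PE^l)$ over $X_l$, cannot be beaten by any point of $D_l\subseteq X_l$, while efficiency only serves to place $x_l^*$ inside the feasible set $XE\cap D_l$ of the restricted problem. The paper merely packages this as a proof by contradiction, certifying the relaxation optimality through the nonpositive reduced costs $\widehat{\varphi}_j\leq 0$ in the final simplex tableau, so the two arguments are essentially identical.
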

\begin{proof}

Suppose that $x_l^*$ is not optimal for program ($PE$). Then, $\exists x\in D_{l},\ x\neq x_l^*$ such that $\varphi(x)>\varphi_{opt}$ from \textbf{Theorem 1}. However, $x_l^*$ being efficient, which means that $\varphi_{opt}\geq \varphi(x_l^*)$. Thus $\varphi(x)\geq\varphi(x_l^*)$. In the other hand, at the current simplex tableau, the expression of $\varphi$ can be written as: 

$\varphi(x)=\varphi(x_l^*)+\sum\limits_{j\in N_{l}}\widehat{\varphi}_{j}x_{j}$

$\Rightarrow$ $\varphi(x_l^*)+\sum\limits_{j\in N_{l}}\widehat{\varphi}_{j}x_{j}>\varphi(x_l^*)$

$\Rightarrow$ $\sum\limits_{j\in N_{l}}\widehat{\varphi}_{j}x_{j}>0$ 

which contradicts the fact that $\widehat{\varphi}_{j}\leq 0,\ \forall j\in N_{l}$.
\end{proof}

\begin{proposition}
If $\Delta_l=\varnothing$, then $\forall x \in D_{l+1}$, $x$ is not efficient.
\end{proposition}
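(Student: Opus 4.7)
The plan is to recycle the dominance chain from the proof of Theorem 1, with the observation that the hypothesis $\Delta_l=\varnothing$ now supplies information at \emph{every} non-basic index rather than only along one selected direction.

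I would first unpack the hypothesis. Because $\Delta_l$ is the union of $\{j\in N_l\mid \exists\,i,\ \bar{\gamma}_j^i>0\}$ and $\{j\in N_l\mid \bar{\gamma}_j^i=0\ \forall i\}$, its emptiness forces every $j\in N_l$ to satisfy both $\bar{\gamma}_j^i\le 0$ for all $i$ and $\bar{\gamma}_j^{i}<0$ for at least one index $i$. Now pick any $x\in D_{l+1}$ with $x\neq x_l^*$: since every non-basic coordinate of $x_l^*$ vanishes, at least one non-basic coordinate of $x$ must be strictly positive, say $x_{j_0}>0$ with $j_0\in N_l$. Expressing each criterion in the simplex tableau associated with $x_l^*$,
\[
z_i(x)=\frac{\sum_{j\in N_l}\bar{c}_j^i x_j+\bar{\alpha}^i}{\sum_{j\in N_l}\bar{d}_j^i x_j+\bar{\beta}^i},\qquad z_i(x_l^*)=\frac{\bar{\alpha}^i}{\bar{\beta}^i},
\]
I would then repeat the inequality manipulations from the proof of Theorem 1 (specialised to $\Delta_l=\varnothing$, so that $N_l\setminus\Delta_l=N_l$), using the ingredients $\bar{\gamma}_j^i\le 0$, $\bar{\beta}^i>0$ and $x_j\ge 0$, to conclude $z_i(x)\le z_i(x_l^*)$ for every $i\in\{1,\dots,k\}$.

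The concluding step upgrades this weak inequality to a strict one on at least one criterion. At the active index $j_0$ the hypothesis supplies an index $i_0$ with $\bar{\gamma}_{j_0}^{i_0}<0$; combined with $x_{j_0}>0$ this produces a matching strict inequality inside the chain above, so that $z_{i_0}(x)<z_{i_0}(x_l^*)$. Hence $z(x_l^*)$ dominates $z(x)$ and $x$ is not efficient for $(P)$. The remaining case $x=x_l^*$ is already covered by the algorithm: if Step~5 is reached, the efficiency test has just declared $x_l^*$ non-efficient, since otherwise the node would have been pruned in Step~3.3.

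The delicate point is precisely the strictness step: one must check that a single strictly negative reduced gradient multiplied by a single strictly positive non-basic coordinate is not cancelled when the fraction is re-assembled. This rests on the strict positivity $\bar{\beta}^{i_0}=d^{i_0}x_l^*+\beta^{i_0}>0$ together with the monotonicity of the fractional transformation already exploited in Theorem~1, and should follow by a direct sign check at the step where the numerator and denominator of $z_{i_0}(x)$ are compared with those of $z_{i_0}(x_l^*)$.
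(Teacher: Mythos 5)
Your proposal is correct and follows essentially the same route as the paper: both unpack $\Delta_l=\varnothing$ into the sign conditions $\bar{\gamma}_j^i\le 0$ for all $i$ with strict negativity for at least one criterion at each $j\in N_l$, and conclude that $x_l^*$ dominates every other point of the reduced domain via the reduced-gradient inequality chain of Theorem~1 specialised to $N_l\setminus\Delta_l=N_l$. You merely make explicit what the paper leaves implicit (the propagation of the strict inequality through the fraction, and the edge case $x=x_l^*$ being handled by the efficiency test of Step~3.3), which tightens rather than changes the argument.
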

\begin{proof}
$\Delta_l=\emptyset$, then $\forall i\in\left\{1,\cdots,r\right\}$, $\forall j\in N_l$, we have $\hat{c}^i_j\leq 0$ and $\exists i_{0}\in \left\{1,\cdots,r\right\}$ such that $\hat{c}^{i_{0}}_j < 0$ $\forall j\in N_l$. So, $x_l^*$ dominates all points $x$, $x\neq x_l^*$ of domain $D_l$.
\end{proof}

\begin{proposition}
If $\Delta_l=\emptyset$ at the current integer solution $x_l^*$, then   $D_l\backslash{x_l^*}$ is an unexplored domain.
\end{proposition}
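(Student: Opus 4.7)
The plan is to deduce Proposition~4 directly from Proposition~3 together with the defining feasibility constraint of $(PE)$. First, I would recall that $\Delta_l=\emptyset$ forces $\bar{\gamma}^i_j\le 0$ for every criterion $i\in\{1,\dots,k\}$ and every nonbasic index $j\in N_l$, with strict inequality for at least one criterion (otherwise the second clause in the definition of $\Delta_l$ would place $j$ into $\Delta_l$). Proposition~3 then immediately gives that the current incumbent $x_l^*$ weakly dominates every other point of $D_l$ and strictly dominates it on at least one objective.

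Next, I would observe that this domination is precisely the failure of Pareto-efficiency: for every $x\in D_l\setminus\{x_l^*\}$ one has $z_i(x)\le z_i(x_l^*)$ for all $i$ with strict inequality somewhere, so no such $x$ belongs to the efficient set $XE$ of $(P)$. Because $(PE)$ restricts the decision variable to $XE$, the entire subdomain $D_l\setminus\{x_l^*\}$ is infeasible for $(PE)$ and therefore cannot contain an optimal solution. This is exactly the sense in which the branch-and-bound procedure may leave it unexplored: pruning this node at Step~5 discards only non-Pareto (hence $(PE)$-infeasible) points, while the incumbent $x_l^*$ itself has already been handled by the efficiency test at Step~3.3 before Step~5 is reached.

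The only subtle point I anticipate is notational rather than mathematical: one has to fix the meaning of ``unexplored'' in the algorithmic sense, namely ``safely prunable without loss of optimality for $(PE)$.'' Once that is identified with ``contains no $(PE)$-feasible point other than those already processed,'' the proposition reduces to a direct reuse of Proposition~3 and the constraint $x\in XE$; no further computation from the current simplex tableau is required, and no auxiliary bound on $\psi$ needs to be invoked.
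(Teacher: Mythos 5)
Your argument is correct and follows essentially the same route as the paper's: both deduce from $\Delta_l=\emptyset$ that $x_l^*$ dominates (is the ideal point of) every other point of $D_l$, so $D_l\setminus\{x_l^*\}$ contains no efficient --- hence no $(PE)$-feasible --- solutions and may be pruned. Your write-up is merely more explicit than the paper's one-line proof about why the domination is strict (the second clause in the definition of $\Delta_l$) and about what ``unexplored'' means algorithmically, but the underlying idea is identical.
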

\begin{proof}
$\Delta_l=\emptyset$ means that $x_l^*$ is an optimal solution for all criterion,  hence $x_l^*$ is an ideal point  in the domain $X_l$ and $X_l\backslash{x_l^*}$. does not contain efficient solutions.
\end{proof}
\begin{theorem}
If $\psi_opt\geqslant\psi(x_l^*)$, then $\nexists x\in D_l$   such that $\psi(x)>\psi_opt$.
\end{theorem}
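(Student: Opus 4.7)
The plan is to exploit the fact that the feasible region $D_l$ of the integer problem at step $l$ is contained in the relaxed region $X_l$ over which $(PE^l)$ is solved to optimality. Since $x_l^*$ is produced by the (dual) simplex or Cambini--Martein method applied to $(PE^l)$ in Step~2, it is by construction a maximizer of $\psi$ on the whole of $X_l$. Combining this with the hypothesis $\psi_{opt}\ge \psi(x_l^*)$ will yield the conclusion through a short chain of inequalities.

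Concretely, I would proceed as follows. First, I record the defining property of $x_l^*$: for every $x\in X_l$, $\psi(x)\le \psi(x_l^*)$. Second, I observe that $D_l=X_l\cap\mathbb{Z}^n\subseteq X_l$, so the preceding inequality holds a fortiori for every $x\in D_l$. Third, I chain this with the standing hypothesis to obtain $\psi(x)\le \psi(x_l^*)\le \psi_{opt}$ for every $x\in D_l$. Finally, the assumed existence of some $x\in D_l$ with $\psi(x)>\psi_{opt}$ would immediately violate this chain, yielding the claimed non-existence by contradiction.

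There is no real technical obstacle here; the statement is essentially a reformulation of the standard LP-relaxation bounding argument that underlies the fathoming rule of Step~3.1. The only point that deserves to be made explicit is the semantic one that $x_l^*$ denotes the (possibly fractional) optimal vertex returned when $(PE^l)$ is solved over the continuous polyhedron $X_l$, and \emph{not} an integer incumbent; otherwise the inequality $\psi(x_l^*)\ge \psi(x)$ for $x\in D_l$ would not be available. Once that is clarified, the argument reduces to a one-line bounding chain, and it simultaneously justifies why a node satisfying $\psi_{opt}\ge \psi(x_l^*)$ can be safely pruned in the branch-and-bound tree.
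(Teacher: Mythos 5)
Your proposal is correct, and in fact it supplies the substance that the paper's own one-sentence proof omits. The paper merely remarks that solutions with $\psi(x)<\psi_{opt}$ are ``not interesting'' because an efficient solution already attains the incumbent value; this observation never addresses the actual claim, namely that no $x\in D_l$ can satisfy $\psi(x)>\psi_{opt}$. Your argument closes that gap by the standard relaxation bound: $x_l^*$ is the global maximizer of $\psi$ over the polyhedron $X_l$ (a linear fractional objective attains its maximum at a vertex and every local maximum is global, so the simplex-type method of Step~2 does return the true optimum), hence $\psi(x)\le\psi(x_l^*)\le\psi_{opt}$ for every $x\in D_l=X_l\cap\mathbb{Z}^n\subseteq X_l$, which is exactly the non-existence asserted. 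Your remark on the semantics of $x_l^*$ is also well taken: the paper's notation list describes $x_l^*$ as ``the integer solution found during solving $(PE^l)$,'' but the fathoming rule of Step~3.1 only makes sense if $x_l^*$ is the (possibly fractional) optimum of the continuous relaxation at node $l$, which is the reading your proof requires and the one consistent with the algorithm. In short, your route is the correct justification of the pruning rule; the paper's stated proof is not so much a different route as an incomplete one.
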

\begin{proof}
It is obvious that all solutions $x$ for which $\psi(x)<\psi_{opt}$ are not interesting even efficient, since the existence of  an efficient solution giving already the best value of $\psi$,
\end{proof}
\begin{theorem}
The algorithm terminates in a finite number of iterations and returns the optimal solution of program (PE).
\end{theorem}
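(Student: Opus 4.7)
The plan is to split the statement into two separate assertions and address each in turn: (i) the search tree that the algorithm builds is finite, and (ii) the value $\psi_{opt}$ returned at termination is the optimum of $(PE)$.

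For finiteness, I would first use the fact that $X$ is a compact polyhedron, so the integer set $D=X\cap\mathbb{Z}^n$ is finite. The tree has two kinds of descendants: branching children produced in Step 4 and cut children produced in Step 5. Along any root-to-leaf path, the branching children behave exactly as in classical branch-and-bound on a bounded integer program: each branching step either adds a bound $x_j\le\lfloor\alpha_j\rfloor$ or $x_j\ge\lfloor\alpha_j\rfloor+1$ on a coordinate whose LP value is fractional, and this can recur only finitely often before the relaxation becomes infeasible or integral. Each type-I efficient cut $\sum_{j\in\Delta_l}x_j\ge 1$ strictly excludes the current integer vertex $x^*_l$ (all variables of $\Delta_l\subseteq N_l$ are non-basic, hence zero at $x^*_l$); since $|D|$ is finite, only finitely many such cuts can be applied on one branch before $D_l$ is exhausted. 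The type-II cut behaves similarly, removing a non-empty region bounded away from the current solution. Hence every branch has finite length and the whole tree is finite.

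For correctness, I would argue that no efficient integer point $x$ with $\psi(x)>\psi_{opt}$ is ever discarded. The possible discarding mechanisms are: (a) a type-I cut in Step 5 removes $X_l\setminus X^1_{l+1}$; Theorem~1 guarantees that every integer point so removed is non-efficient, so no candidate for $(PE)$ is lost. (b) A node is fathomed at Step 3.1 because $\psi_{opt}\ge\psi(x^*_l)$; Theorem~3 ensures that then every $x\in D_l$ satisfies $\psi(x)\le\psi_{opt}$, so nothing interesting is pruned. (c) An efficient $x^*_l$ is found at Step 3.3 and the node is pruned; Theorem~2 says this $x^*_l$ is actually optimal for $(PE)$ on $D_l$, so the prune is safe after updating $\psi_{opt}$ and $x_{opt}$. (d) A node is fathomed when $\Delta_l=\emptyset$; Propositions 1--2 then imply $D_l\setminus\{x^*_l\}$ contains no efficient solution, so again no candidate is lost. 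Branching at Step 4 does not lose feasible integer points, since $\alpha_j\notin\mathbb{Z}$ forces every $x\in D_l$ to lie in exactly one child. Induction over the tree (from leaves upward) then yields that $x_{opt}$, the best efficient point ever found, attains $\max\{\psi(x):x\in XE\}$.

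The main obstacle is the interaction between branching and efficient cuts, which must be shown to interleave without cycles. The delicate point is the termination part: one has to verify that after a type-I cut is added, the next LP pivot in Step 2 truly yields a different vertex, and that the combined sequence of branches and cuts cannot perpetuate. The clean way to do this is to observe that along any fixed branch, the set $D_l$ is strictly decreasing (by at least one integer point) whenever a type-I cut is issued, and the branching tree on integer variables restricted to a bounded polyhedron is itself finite; combining these two finiteness statements gives overall termination. Assembling this book-keeping carefully, together with the chain of cited results Theorem~1, Theorem~2, Theorem~3 and Propositions 1--2, delivers the conclusion.
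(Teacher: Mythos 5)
Your proposal is correct and follows essentially the same route as the paper's own proof: finiteness of $D=X\cap\mathbb{Z}^n$ plus the fact that each type-I/II cut removes at least the current integer solution gives termination, and the safety of the four fathoming rules (infeasibility, $\psi_{opt}\ge\psi(x_l^*)$, efficiency of $x_l^*$, and $\Delta_l=\emptyset$) is justified by invoking the preceding Theorems 1--3 and Propositions 1--2. Your write-up is merely a more detailed and carefully organized version of the paper's terser argument, in particular making explicit the interleaving of branching and cutting that the paper leaves implicit.
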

\begin{proof}
contains a finite number of integer solutions. At each step $l$ of the algorithm, if an integer solution $x_{l}^{\ast }$ is reached, we proceed to eliminate it as well as a subset of integer non interesting solutions by taking into account \textbf{Theorem 1} above (adding cuts). In the other hand, four saturating tests are used without loss of the optimal solution of (PE). First, when the set $\Delta_{l}$ is empty the corresponding solution $x_{l}^{\ast }$ is an ideal point and the current node can be pruned since no criterion can be improved. Secondly, if at a stage $l$, the current integer solution $x_l^*$ is efficient, the corresponding node is fathomed since $x_l^*$ is optimal for (PE) over $D_l$. Third, if $\psi_{opt}$ (value of the best efficient solution found for (PE)) is greater than that of the optimal solution over $D_l$, the node $l$ also is fathomed. Finally, the trivial case when the reduced domain becomes infeasible. Hence, the algorithm converges toward an optimal solution for (PE) in finite number of steps.
\end{proof}
\section{Experimental study}

The proposed method has been coded using MATLAB R2013a and run on a personal computer with 3.4 GHz Dell Pentium 4 and 4 GB of memory. We should notice that all subroutines were programmed and no optimization packages were used. 

The data are uncorrelated integers uniformly distributed in the interval [1, 99] for the numerator and denominator coefficients, [-10, 20] for the numerator constant, [1, 20] for the denominator constant, and [1, 30] for constraints coefficients. For each constraint, the right-hand side value was taken into [50,100]. For each instance (n,m,k) (n is the number of variables, m the number of constraints, and k the number of objectives), a series of 10 problems were solved. Tables 1 and 2 summarize the obtained results where mean, min and maximum number of CPU (s), number of efficient solutions (Eff Sol) are reported. Also the ratio of SN (staurated node)/ CN (created node) is given for each triplet.
\newpage
\begin{landscape}

\begin{table}[htbp]
  \centering
  \caption{Medium size problems}
    \begin{tabular}{ccccccccccc}
    \hline
    \multicolumn{1}{c}{\textbf{n}} & \multicolumn{1}{c}{\textbf{m}} & \multicolumn{1}{c}{\textbf{r}} & \multicolumn{1}{c}{\textbf{Mean CPU(s)}} & \multicolumn{1}{c}{\textbf{Min CPU(s)}} & \multicolumn{1}{c}{\textbf{Max CPU(s)}} & \multicolumn{1}{c}{\textbf{Mean Eff Sol}} & \multicolumn{1}{c}{\textbf{Min Eff Sol}} & \multicolumn{1}{c}{\textbf{Max Eff Sol}} & \multicolumn{1}{c}{\textbf{Mean CN}} & \multicolumn{1}{c}{\textbf{Mean SN/CN \%}} \\
    \hline
    \multicolumn{1}{l}{\textbf{30}} & \multicolumn{1}{l}{\textbf{10}} & \multicolumn{1}{l}{\textbf{2}} & \multicolumn{1}{l}{\textbf{30.88}} & \multicolumn{1}{l}{\textbf{0.19}} & \multicolumn{1}{l}{\textbf{127.44}} & \multicolumn{1}{l}{\textbf{3.80}} & \multicolumn{1}{l}{\textbf{1}} & \multicolumn{1}{l}{\textbf{7}} & \multicolumn{1}{l}{\textbf{2165,6}} & \multicolumn{1}{l}{\textbf{41.65}} \\
    \multicolumn{1}{l}{\textbf{30}} & \multicolumn{1}{l}{\textbf{10}} & \multicolumn{1}{l}{\textbf{3}} & \multicolumn{1}{l}{\textbf{18,39}} & \multicolumn{1}{l}{\textbf{0.35}} & \multicolumn{1}{l}{\textbf{79.21}} & \multicolumn{1}{l}{\textbf{8.50}} & \multicolumn{1}{l}{\textbf{3}} & \multicolumn{1}{l}{\textbf{17}} & \multicolumn{1}{c}{\textbf{1921,6}} & \multicolumn{1}{l}{\textbf{31.29}} \\
    \multicolumn{1}{l}{\textbf{30}} & \multicolumn{1}{l}{\textbf{10}} & \multicolumn{1}{l}{\textbf{4}} & \multicolumn{1}{l}{\textbf{28.68}} & \multicolumn{1}{l}{\textbf{0.28}} & \multicolumn{1}{l}{\textbf{125.85}} & \multicolumn{1}{l}{\textbf{12.40}} & \multicolumn{1}{l}{\textbf{3}} & \multicolumn{1}{l}{\textbf{35}} & \multicolumn{1}{l}{\textbf{2282,5}} & \multicolumn{1}{l}{\textbf{32.48}} \\
    \multicolumn{1}{l}{\textbf{30}} & \multicolumn{1}{l}{\textbf{15}} & \multicolumn{1}{l}{\textbf{2}} & \multicolumn{1}{l}{\textbf{22.76}} & \multicolumn{1}{l}{\textbf{0.17}} & \multicolumn{1}{l}{\textbf{97.03}} & \multicolumn{1}{l}{\textbf{4.42}} & \multicolumn{1}{l}{\textbf{1}} & \multicolumn{1}{l}{\textbf{13}} & \multicolumn{1}{l}{\textbf{1971.00}} & \multicolumn{1}{l}{\textbf{38.12}} \\
    \multicolumn{1}{l}{\textbf{30}} & \multicolumn{1}{l}{\textbf{15}} & \multicolumn{1}{l}{\textbf{3}} & \multicolumn{1}{l}{\textbf{13.40}} & \multicolumn{1}{l}{\textbf{0.15}} & \multicolumn{1}{l}{\textbf{51.91}} & \multicolumn{1}{l}{\textbf{7.58}} & \multicolumn{1}{l}{\textbf{1}} & \multicolumn{1}{l}{\textbf{17}} & \multicolumn{1}{l}{\textbf{1193.95}} & \multicolumn{1}{l}{\textbf{34.23}} \\
    \multicolumn{1}{l}{\textbf{30}} & \multicolumn{1}{l}{\textbf{15}} & \multicolumn{1}{l}{\textbf{4}} & \multicolumn{1}{l}{\textbf{25.05}} & \multicolumn{1}{l}{\textbf{0.24}} & \multicolumn{1}{l}{\textbf{132.01}} & \multicolumn{1}{l}{\textbf{11.40}} & \multicolumn{1}{l}{\textbf{3}} & \multicolumn{1}{l}{\textbf{23}} & \multicolumn{1}{l}{\textbf{1951.00}} & \multicolumn{1}{l}{\textbf{31.07}} \\
    \multicolumn{1}{l}{\textbf{30}} & \multicolumn{1}{l}{\textbf{25}} & \multicolumn{1}{l}{\textbf{2}} & \multicolumn{1}{l}{\textbf{31.08}} & \multicolumn{1}{l}{\textbf{0.33}} & \multicolumn{1}{l}{\textbf{152.00}} & \multicolumn{1}{l}{\textbf{4.40}} & \multicolumn{1}{l}{\textbf{2}} & \multicolumn{1}{l}{\textbf{9}} & \multicolumn{1}{l}{\textbf{1833.20}} & \multicolumn{1}{l}{\textbf{38.01}} \\
    \multicolumn{1}{l}{\textbf{30}} & \multicolumn{1}{l}{\textbf{25}} & \multicolumn{1}{l}{\textbf{3}} & \multicolumn{1}{l}{\textbf{17.13}} & \multicolumn{1}{l}{\textbf{0.62}} & \multicolumn{1}{l}{\textbf{83.69}} & \multicolumn{1}{l}{\textbf{6.10}} & \multicolumn{1}{l}{\textbf{2}} & \multicolumn{1}{l}{\textbf{10}} & \multicolumn{1}{l}{\textbf{1276.20}} & \multicolumn{1}{l}{\textbf{36.99}} \\
    \multicolumn{1}{l}{\textbf{30}} & \multicolumn{1}{l}{\textbf{25}} & \multicolumn{1}{l}{\textbf{4}} & \multicolumn{1}{l}{\textbf{13.46}} & \multicolumn{1}{l}{\textbf{0.54}} & \multicolumn{1}{l}{\textbf{50.95}} & \multicolumn{1}{l}{\textbf{9.70}} & \multicolumn{1}{l}{\textbf{3}} & \multicolumn{1}{l}{\textbf{19}} & \multicolumn{1}{l}{\textbf{804.90}} & \multicolumn{1}{l}{\textbf{32.77}} \\
    \multicolumn{1}{l}{\textbf{30}} & \multicolumn{1}{l}{\textbf{55}} & \multicolumn{1}{l}{\textbf{2}} & \multicolumn{1}{l}{\textbf{19.87}} & \multicolumn{1}{l}{\textbf{0.26}} & \multicolumn{1}{l}{\textbf{116.60}} & \multicolumn{1}{l}{\textbf{4.32}} & \multicolumn{1}{l}{\textbf{2}} & \multicolumn{1}{l}{\textbf{11}} & \multicolumn{1}{l}{\textbf{760.11}} & \multicolumn{1}{l}{\textbf{37.49}} \\
    \multicolumn{1}{l}{\textbf{40}} & \multicolumn{1}{l}{\textbf{60}} & \multicolumn{1}{l}{\textbf{3}} & \multicolumn{1}{l}{\textbf{38,28}} & \multicolumn{1}{l}{\textbf{0.82}} & \multicolumn{1}{l}{\textbf{176.71}} & \multicolumn{1}{l}{\textbf{7.32}} & \multicolumn{1}{l}{\textbf{2}} & \multicolumn{1}{l}{\textbf{21}} & \multicolumn{1}{l}{\textbf{806.05}} & \multicolumn{1}{l}{\textbf{34.02}} \\
    \multicolumn{1}{l}{\textbf{50}} & \multicolumn{1}{l}{\textbf{10}} & \multicolumn{1}{l}{\textbf{2}} & \multicolumn{1}{l}{\textbf{48.33}} & \multicolumn{1}{l}{\textbf{1.04}} & \multicolumn{1}{l}{\textbf{127.63}} & \multicolumn{1}{l}{\textbf{4.00}} & \multicolumn{1}{l}{\textbf{1}} & \multicolumn{1}{l}{\textbf{10}} & \multicolumn{1}{l}{\textbf{4053.50}} & \multicolumn{1}{l}{\textbf{36.76}} \\
    \multicolumn{1}{l}{\textbf{50}} & \multicolumn{1}{l}{\textbf{10}} & \multicolumn{1}{l}{\textbf{3}} & \multicolumn{1}{l}{\textbf{120.10}} & \multicolumn{1}{l}{\textbf{0.16}} & \multicolumn{1}{l}{\textbf{354,91}} & \multicolumn{1}{l}{\textbf{11.30}} & \multicolumn{1}{l}{\textbf{1}} & \multicolumn{1}{l}{\textbf{34}} & \multicolumn{1}{l}{\textbf{5753,8}} & \multicolumn{1}{l}{\textbf{31.07}} \\
    \multicolumn{1}{l}{\textbf{50}} & \multicolumn{1}{l}{\textbf{10}} & \multicolumn{1}{l}{\textbf{4}} & \multicolumn{1}{l}{\textbf{127.61}} & \multicolumn{1}{l}{\textbf{0.17}} & \multicolumn{1}{l}{\textbf{539,2}} & \multicolumn{1}{l}{\textbf{9.10}} & \multicolumn{1}{l}{\textbf{1}} & \multicolumn{1}{l}{\textbf{28}} & \multicolumn{1}{l}{\textbf{4569,8}} & \multicolumn{1}{l}{\textbf{34.91}} \\
    \multicolumn{1}{l}{\textbf{50}} & \multicolumn{1}{l}{\textbf{15}} & \multicolumn{1}{l}{\textbf{2}} & \multicolumn{1}{l}{\textbf{324.61}} & \multicolumn{1}{l}{\textbf{43.23}} & \multicolumn{1}{l}{\textbf{1116.76}} & \multicolumn{1}{l}{\textbf{6.60}} & \multicolumn{1}{l}{\textbf{3}} & \multicolumn{1}{l}{\textbf{14}} & \multicolumn{1}{l}{\textbf{12486.00}} & \multicolumn{1}{l}{\textbf{45.55}} \\
    \multicolumn{1}{l}{\textbf{50}} & \multicolumn{1}{l}{\textbf{15}} & \multicolumn{1}{l}{\textbf{3}} & \multicolumn{1}{l}{\textbf{124.24}} & \multicolumn{1}{l}{\textbf{2.30}} & \multicolumn{1}{l}{\textbf{419.01}} & \multicolumn{1}{l}{\textbf{7.90}} & \multicolumn{1}{l}{\textbf{3}} & \multicolumn{1}{l}{\textbf{20}} & \multicolumn{1}{l}{\textbf{5481.00}} & \multicolumn{1}{l}{\textbf{37.46}} \\
    \multicolumn{1}{l}{\textbf{50}} & \multicolumn{1}{l}{\textbf{15}} & \multicolumn{1}{l}{\textbf{4}} & \multicolumn{1}{l}{\textbf{44.93}} & \multicolumn{1}{l}{\textbf{0.78}} & \multicolumn{1}{l}{\textbf{147.84}} & \multicolumn{1}{l}{\textbf{11.50}} & \multicolumn{1}{l}{\textbf{5}} & \multicolumn{1}{l}{\textbf{25}} & \multicolumn{1}{l}{\textbf{2617.80}} & \multicolumn{1}{l}{\textbf{33.12}} \\
    \multicolumn{1}{l}{\textbf{50}} & \multicolumn{1}{l}{\textbf{25}} & \multicolumn{1}{l}{\textbf{2}} & \multicolumn{1}{l}{\textbf{76.12}} & \multicolumn{1}{l}{\textbf{0.42}} & \multicolumn{1}{l}{\textbf{177.69}} & \multicolumn{1}{l}{\textbf{5.30}} & \multicolumn{1}{l}{\textbf{1}} & \multicolumn{1}{l}{\textbf{9}} & \multicolumn{1}{l}{\textbf{2974.90}} & \multicolumn{1}{l}{\textbf{38.70}} \\
    \multicolumn{1}{l}{\textbf{50}} & \multicolumn{1}{l}{\textbf{25}} & \multicolumn{1}{l}{\textbf{3}} & \multicolumn{1}{l}{\textbf{105.58}} & \multicolumn{1}{l}{\textbf{0.14}} & \multicolumn{1}{l}{\textbf{893.42}} & \multicolumn{1}{l}{\textbf{8.89}} & \multicolumn{1}{l}{\textbf{1}} & \multicolumn{1}{l}{\textbf{20}} & \multicolumn{1}{l}{\textbf{2837.16}} & \multicolumn{1}{l}{\textbf{30.00}} \\
    \multicolumn{1}{l}{\textbf{50}} & \multicolumn{1}{l}{\textbf{25}} & \multicolumn{1}{l}{\textbf{4}} & \multicolumn{1}{l}{\textbf{139.79}} & \multicolumn{1}{l}{\textbf{0.13}} & \multicolumn{1}{l}{\textbf{705.29}} & \multicolumn{1}{l}{\textbf{11.16}} & \multicolumn{1}{l}{\textbf{1}} & \multicolumn{1}{l}{\textbf{25}} & \multicolumn{1}{l}{\textbf{2150.84}} & \multicolumn{1}{l}{\textbf{36.82}} \\
    \multicolumn{1}{l}{\textbf{50}} & \multicolumn{1}{l}{\textbf{25}} & \multicolumn{1}{l}{\textbf{5}} & \multicolumn{1}{l}{\textbf{75.51}} & \multicolumn{1}{l}{\textbf{0.10}} & \multicolumn{1}{l}{\textbf{396.18}} & \multicolumn{1}{l}{\textbf{15.47}} & \multicolumn{1}{l}{\textbf{1}} & \multicolumn{1}{l}{\textbf{34}} & \multicolumn{1}{l}{\textbf{1056.32}} & \multicolumn{1}{l}{\textbf{30.85}} \\
    \multicolumn{1}{l}{\textbf{50}} & \multicolumn{1}{l}{\textbf{25}} & \multicolumn{1}{l}{\textbf{6}} & \multicolumn{1}{l}{\textbf{32.18}} & \multicolumn{1}{l}{\textbf{0.23}} & \multicolumn{1}{l}{\textbf{196.78}} & \multicolumn{1}{l}{\textbf{11.75}} & \multicolumn{1}{l}{\textbf{1}} & \multicolumn{1}{l}{\textbf{30}} & \multicolumn{1}{l}{\textbf{896.85}} & \multicolumn{1}{l}{\textbf{34.41}} \\
    \multicolumn{1}{l}{\textbf{50}} & \multicolumn{1}{l}{\textbf{100}} & \multicolumn{1}{l}{\textbf{2}} & \multicolumn{1}{l}{\textbf{339.41}} & \multicolumn{1}{l}{\textbf{52.36}} & \multicolumn{1}{l}{\textbf{919.97}} & \multicolumn{1}{l}{\textbf{5.50}} & \multicolumn{1}{l}{\textbf{2}} & \multicolumn{1}{l}{\textbf{9}} & \multicolumn{1}{l}{\textbf{2871.50}} & \multicolumn{1}{l}{\textbf{44.66}} \\
    \multicolumn{1}{l}{\textbf{60}} & \multicolumn{1}{l}{\textbf{80}} & \multicolumn{1}{l}{\textbf{3}} & \multicolumn{1}{l}{\textbf{300.23}} & \multicolumn{1}{l}{\textbf{0.97}} & \multicolumn{1}{l}{\textbf{1379.76}} & \multicolumn{1}{l}{\textbf{7.20}} & \multicolumn{1}{l}{\textbf{1}} & \multicolumn{1}{l}{\textbf{16}} & \multicolumn{1}{l}{\textbf{1783.35}} & \multicolumn{1}{l}{\textbf{37.13}} \\
    \hline
    \end{tabular}%
  \label{tab:addlabel}%
\end{table}
\end{landscape}

\newpage
\begin{landscape}
\begin{table}[htbp]
  \centering
  \caption{Big size problems}
    \begin{tabular}{ccccccccccc}
    \hline
    \multicolumn{1}{c}{\textbf{n}} & \multicolumn{1}{c}{\textbf{m}} & \multicolumn{1}{c}{\textbf{r}} & \multicolumn{1}{c}{\textbf{Mean CPU(s)}} & \multicolumn{1}{c}{\textbf{Min CPU(s)}} & \multicolumn{1}{c}{\textbf{Max CPU(s)}} & \multicolumn{1}{c}{\textbf{Mean Eff Sol}} & \multicolumn{1}{c}{\textbf{Min Eff Sol}} & \multicolumn{1}{c}{\textbf{Max Eff Sol}} & \multicolumn{1}{c}{\textbf{Mean CN}} & \multicolumn{1}{c}{\textbf{Mean SN/CN \%}} \\
    \hline
    \multicolumn{1}{l}{\textbf{80}} & \multicolumn{1}{l}{\textbf{35}} & \multicolumn{1}{l}{\textbf{2}} & \multicolumn{1}{l}{\textbf{534.24}} & \multicolumn{1}{l}{\textbf{0.28}} & \multicolumn{1}{l}{\textbf{3117.52}} & \multicolumn{1}{l}{\textbf{4.37}} & \multicolumn{1}{l}{\textbf{1}} & \multicolumn{1}{l}{\textbf{13}} & \multicolumn{1}{l}{\textbf{9011.00}} & \multicolumn{1}{l}{\textbf{41.69}} \\
    \multicolumn{1}{l}{\textbf{80}} & \multicolumn{1}{l}{\textbf{35}} & \multicolumn{1}{l}{\textbf{3}} & \multicolumn{1}{l}{\textbf{672.09}} & \multicolumn{1}{l}{\textbf{0.35}} & \multicolumn{1}{l}{\textbf{4717.27}} & \multicolumn{1}{l}{\textbf{9.95}} & \multicolumn{1}{l}{\textbf{1}} & \multicolumn{1}{l}{\textbf{25}} & \multicolumn{1}{l}{\textbf{6907.16}} & \multicolumn{1}{l}{\textbf{38.17}} \\
    \multicolumn{1}{l}{\textbf{80}} & \multicolumn{1}{l}{\textbf{35}} & \multicolumn{1}{l}{\textbf{4}} & \multicolumn{1}{l}{\textbf{266.54}} & \multicolumn{1}{l}{\textbf{0.41}} & \multicolumn{1}{l}{\textbf{1013.70}} & \multicolumn{1}{l}{\textbf{13.28}} & \multicolumn{1}{l}{\textbf{1}} & \multicolumn{1}{l}{\textbf{41}} & \multicolumn{1}{l}{\textbf{2754.83}} & \multicolumn{1}{l}{\textbf{34.65}} \\
    \multicolumn{1}{l}{\textbf{80}} & \multicolumn{1}{l}{\textbf{35}} & \multicolumn{1}{l}{\textbf{6}} & \multicolumn{1}{l}{\textbf{198.58}} & \multicolumn{1}{l}{\textbf{0.25}} & \multicolumn{1}{l}{\textbf{1161.67}} & \multicolumn{1}{l}{\textbf{15.58}} & \multicolumn{1}{l}{\textbf{1}} & \multicolumn{1}{l}{\textbf{46}} & \multicolumn{1}{l}{\textbf{1295.68}} & \multicolumn{1}{l}{\textbf{34.45}} \\
    \multicolumn{1}{l}{\textbf{80}} & \multicolumn{1}{l}{\textbf{50}} & \multicolumn{1}{l}{\textbf{3}} & \multicolumn{1}{l}{\textbf{504.61}} & \multicolumn{1}{l}{\textbf{1.31}} & \multicolumn{1}{l}{\textbf{1836.49}} & \multicolumn{1}{l}{\textbf{8.85}} & \multicolumn{1}{l}{\textbf{1}} & \multicolumn{1}{l}{\textbf{23}} & \multicolumn{1}{l}{\textbf{5397.30}} & \multicolumn{1}{l}{\textbf{36.26}} \\
    \multicolumn{1}{l}{\textbf{80}} & \multicolumn{1}{l}{\textbf{120}} & \multicolumn{1}{l}{\textbf{2}} & \multicolumn{1}{l}{\textbf{1960.21}} & \multicolumn{1}{l}{\textbf{14.18}} & \multicolumn{1}{l}{\textbf{7509.54}} & \multicolumn{1}{l}{\textbf{6.00}} & \multicolumn{1}{l}{\textbf{1}} & \multicolumn{1}{l}{\textbf{13}} & \multicolumn{1}{l}{\textbf{4172.10}} & \multicolumn{1}{l}{\textbf{43.44}} \\
    \multicolumn{1}{l}{\textbf{100}} & \multicolumn{1}{l}{\textbf{80}} & \multicolumn{1}{l}{\textbf{3}} & \multicolumn{1}{l}{\textbf{1171.52}} & \multicolumn{1}{l}{\textbf{2.94}} & \multicolumn{1}{l}{\textbf{4490.26}} & \multicolumn{1}{l}{\textbf{8.05}} & \multicolumn{1}{l}{\textbf{3}} & \multicolumn{1}{l}{\textbf{18}} & \multicolumn{1}{l}{\textbf{5200.53}} & \multicolumn{1}{l}{\textbf{36.41}} \\
    \multicolumn{1}{l}{\textbf{120}} & \multicolumn{1}{l}{\textbf{60}} & \multicolumn{1}{l}{\textbf{4}} & \multicolumn{1}{l}{\textbf{1338.38}} & \multicolumn{1}{l}{\textbf{3.32}} & \multicolumn{1}{l}{\textbf{6259.84}} & \multicolumn{1}{l}{\textbf{11.65}} & \multicolumn{1}{l}{\textbf{3}} & \multicolumn{1}{l}{\textbf{28}} & \multicolumn{1}{l}{\textbf{3562.95}} & \multicolumn{1}{l}{\textbf{35.75}} \\
    \multicolumn{1}{l}{\textbf{200}} & \multicolumn{1}{l}{\textbf{100}} & \multicolumn{1}{l}{\textbf{3}} & \multicolumn{1}{l}{\textbf{}} & \multicolumn{1}{l}{\textbf{35801.05}} & \multicolumn{1}{l}{\textbf{}} & \multicolumn{1}{l}{\textbf{}} & \multicolumn{1}{l}{\textbf{14}} & \multicolumn{1}{l}{\textbf{}} & \multicolumn{1}{l}{\textbf{}} & \multicolumn{1}{l}{\textbf{}} \\
    \hline
    \end{tabular}%
  \label{tab:addlabel}%
\end{table}
\end{landscape}

The results clearly show that the number of generated efficient solutions grows with the number of criteria. However, the proposed method does not review all the efficient solution set of the MOIFP problem to find the optimal solution of the problem (P), only a few of them are visited. Indeed, in \cite{9} the authors reported that, for example for instances with 25 variables, 10 constraints and 4 criteria, up to 100 efficient solutions on average are found for the MOILFP problem. 
The increase in the criteria number does not seem to have a negative effect, as well on the CPU time as the generated nodes number in the search tree, these can even decrease. 
Note that the minimum number of generated efficient solutions is generally equal to 1. In fact, this corresponds to two types of instances:
\begin{itemize}
\item Those for which the first solution found is integer, efficient for MOILFP and optimal for the problem to be solved, resolved in less than one second;
\item Those for which the program has generated integer feasible solutions, but only one is efficient for MOILFP, and therefore optimal for the problem to be solved.
\end{itemize}

We notice that on average, the number of sterilized nodes is around 35\% compared to those created in the search tree. This is explained by the addition of cuts of type I and II, combined with the evaluation test using the local ideal point.

\section{Conclusion}
In this paper, a new exact method is presented to solve the problem of optimizing a linear  fractional  function  over  the  set of integer  efficient solutions of  a  MOILFP problem without generating all of them. It is a branch and bound method coupled with cutting planes having the effect to delete some regions of the feasible domain not containing efficient solutions for the MOILFP problem nor optimal solutions for the fractional function to optimize over the efficient set. In addition to the lower bound of the function to be optimized, two vector bounds are introduced to prune considerably nodes in the tree search; the local ideal point of the sub domain of a node as an upper bound and the set of potentially efficient solutions generated along the resolution process as a set of lower bounds.  The experimental results show that the number of criteria of the MOILFP problem is not a handicap for the method which can take several linear fractional criteria simultaneously. However, it is not the case for the number of variables which weighs heavy on the CPU time.
Being able to extend the method to optimize other types of functions on an integer efficient set seems like a challenge.

%
%
%
 \bibliographystyle{splncs04}
%

\end{document}